\documentclass{rmmcart}

  \usepackage{amsmath,amssymb,amsfonts,bbm,amsthm}
\usepackage{color,diagbox}
\usepackage{pdfpages}
\usepackage{graphicx}
\usepackage{geometry}
\usepackage{url}
\usepackage{enumerate}
\usepackage{extarrows,booktabs,multirow}

\theoremstyle{plain} \newtheorem{thm}{\bf Theorem}[section]
 \newtheorem{lem}[thm]{\bf Lemma}
  \theoremstyle{remark}





\newtheorem{rem}{Remark}

\newcommand{\Rg}       {{\hbox{I\kern-.22em\hbox{R}}}}
\newcommand{\Pg}       {{\hbox{I\kern-.22em\hbox{P}}}}
\newcommand{\Eg}       {{\hbox{I\kern-.22em\hbox{E}}}}

\newcommand{\cov}{\mathop{\mathrm{cov}}\nolimits}

\title{Simulation of Integro-Differential Equation and Application in Estimation of Ruin Probability with Mixed Fractional Brownian Motion}

\author{Chunhao Cai}
\address{School of Mathematics, Shanghai University of Finance and Economics, Shanghai, China}
 \email{caichunhao@mail.shufe.edu.cn}
 \thanks{The first author is the corresponding author}

\author{Weilin Xiao}
 \address{School of Management, Zhejiang University, Hangzhou, China}
 \email{wlxiao@zju.edu.cn}
\thanks{Weilin Xiao was supported by the National Natural Science Foundation of China, grant \# 71871202}

 \date{Month, Day, Year}

\keywords{integro-differential equation,mixed fractional Brownian motion, fundamental martingale, Girsanov theorem, estimation of ruin probability}
 \subjclass{Primary 60G22, Secondary 62F10, 65R20}

\begin{document}
 \begin{abstract}
 In this paper, we are concerned with the numerical solution of one type integro-differential equation by a probability method based on the fundamental martingale of mixed Gaussian processes.  As an application, we will try to simulate the estimation of ruin probability with an unknown parameter driven not by the classical L\'evy process but by the mixed fractional Brownian motion.  
\end{abstract}
\maketitle

\section{Introduction}
In this paper, we are concerned with the numerical solution of one type integro-differential equation by a probability method. Let us consider the following integro-differential equation:
\begin{equation}\label{eq: integro-differential}
g(s,t)+\frac{\partial}{\partial s}\int_0^t g(r,t)\frac{\partial}{\partial r} R(r,s)dr=1,\,\,\, 0<s\leq t \leq T,
\end{equation}
where $R(s,t)$ is defined by 
\begin{equation}\label{eq: covariance}
R(s,t)=\frac{1}{2}\left( t^{2H}+s^{2H}-|t-s|^{2H}\right),\,\, s,t\in [0,T].
\end{equation}
and $H\in (0,1)$ but $H\neq 1/2$. From \cite{CCK}, we can rewrite this equation for different $H$:
\begin{enumerate}
\item[$\bullet$]  For $1/2<H<1$, when the derivative in \eqref{eq: integro-differential} can be interchanged with integration, so that it takes the form of the Wiener-H\"opfner equation 
\begin{equation}\label{eq: integral big}
g(s,t)+H(2H-1)\int_0^t g(r,t)|r-s|^{2H-2}dr=1.
\end{equation}

\item[$\bullet$]  For $0<H<1/2$, the derivative and the integration are no longer interchangeable and the integral equation \eqref{eq: integral big} makes no sense. However, using tools from fractional calculus, \eqref{eq: integro-differential} can still be solved by reduction to a different weakly singular integral equation (see \textit{e.g.} \cite{CCK}):
\begin{equation}\label{eq: integral small}
g(s,t)+\beta_Ht^{-2H}\int_0^t g(r,t)K^H\left(\frac{r}{t},\frac{s}{t}\right)dr=c_Hs^{1/2-H}(t-s)^{1/2-H}
\end{equation}
with the kernel 
$$
K^H(u,v)=|u-v|^{-2H}N(u,v)
$$
where $N(u,v)\in C([0,1]^2)$ is defined in the equation (5.6) of \cite{CCK} and $\beta_H$, $c_H$ are two constants.
\end{enumerate}

\begin{rem}\label{rem1}
As presented in \cite{CCK}, we can extend $R(s,t)$ of \eqref{eq: covariance} to the covariance function of a general centered Gaussian process $G=(G_t,\, 0\leq t \leq T)$ whose kernel
$$
K(s,t)=\frac{\partial ^2}{\partial s \partial t} R(s,t)=\frac{\partial ^2}{\partial s \partial t}\mathbf{E}G_tG_s,\,\, s\neq t
$$
has a weakly singularity on the diagonal.  For kernels with stronger singularity as $H<1/2$ in \eqref{eq: integro-differential} , our method presented in Section \ref{sec:simulation} is applicable to processes with additional "fractional" structure such as sub-fractional Brownian motion, bifractional Brownian motion, Riemann-Liouville process.
\end{rem}

\begin{rem}
Note that the values of $g(s,t)$ of the equations \eqref{eq: integral big} and \eqref{eq: integral small} on the sub-diagonal $\{0<s<t\leq T\}$ determine $g(s,t)$ on the super-diagonal. Hence the problem of solving \eqref{eq: integro-differential} reduces to solving it one the sub-diagonal $\{0<s<t\}$ for all $t\in [0,T]$. For a fixed $t\in [0,T]$, the restriction of \eqref{eq: integral big} and \eqref{eq: integral small} to the sub-diagonal is the Fredholm equation of second kind, whose solvability is very well known under various conditions (see, \textit{e.g.} \cite{Kress14}).
\end{rem}

If we only consider the weakly singular Fredholm integral equation of second kind, there are many different methods to obtain the numerical solution. For example, based on the fact that the equations usually have singularities in their derivatives, reflecting the singularity of the kernel, a product integration method \cite{Schneider81} , a collocation method\cite{VP81}  and a Galerkin method \cite{Graham82} have been designed, Cao and Xu developed in \cite{CX94} a singularity preserving Galerkin method.   Recently, Parts et \textit{a.l.} \cite{PPT05} investigated  a piecewise polynomial collocation method with graded meshes and Cao et \textit{a.l.} \cite{CHLX07} established the super-convergence property of the iteration of  the Hybrid collocation method. We can see \cite{Albert56} and \cite{Atkinson97} for details of other methods. 

However, when $H<1/2$ for the equation \eqref{eq: integro-differential},  as presented in Remark \ref{rem1} we have a kernel with stronger singularity and the previous mentioned numerical methods can not be used directly if we do not consider the formula of the equation \eqref{eq: integral small}. That is why we want to use a probability method to find the numerical solution, we will see that this method is easier to understand and utilize. In fact, this probability method is based on the the definition of the fundamental martingale of mixed fractional Brownian motion which will be presented in Section \ref{sec:simulation}, so first of all, let us introduce the mixed fractional Brownian motion (mfBm for short) denoted by $\xi=(\xi_t,\, t\in [0,T])$, i.e. the process
$$
\xi_t=W_t+B_t^H,\, t\in [0,T]
$$
where $W=(W_t,\, 0\leq t\leq T)$ is the standard Brownian motion and $B^H=(B_t^H,\, 0\leq t\leq T)$ is an independent fractional Brownian motion with Hurst parameter $H\in (0,1)$, that is , a zero mean Gaussian process with the covariance function
$$
\mathbf{E}B_t^H B_s^H=R(s,t)=\frac{1}{2}(t^{2H}+s^{2H}-|t-s|^{2H}),\, s,t \in [0,T].
$$ 

The interest in this process was triggered by Cheridito \cite{Cheridito}, in which the author discovered a curious change in the properties of $\xi$ occurring at $H=3/4$. In order to develop the theory of \cite{Cheridito} and find the maximum likelihood estimator of the drift parameter of mixed fractional Ornstein-Uhlenbeck process \cite{CK17}, Cai et \textit{a.l.} suggested the \cite{CCK}  the canonical representation of mfBm based on the fundamental martingale
\begin{equation}\label{eq: martingale}
M_t:=\mathbf{E}(W_t|\mathcal{F}_t^{\xi}),\,\, t\in [0,T]
\end{equation}
where $\mathcal{F}^{\xi}=\{\mathcal{F}_t^{\xi},\, t\in [0,T]\}$ is the filtration generated by the mfBm. By the Theorem 2.4 in \cite{CCK}, this martingale satisfies
\begin{equation}\label{eq: martingale 1}
M_t=\int_0^t g(s,t)d\xi_t,\,\, \langle M\rangle_t=\int_0^t g(s,t)ds,\, t\in[0,T]
\end{equation}
where $\langle M\rangle$ is the quadratic variation of the martingale $M$ and the function $g(s,t)$ is the unique solution \eqref{eq: integro-differential}. When the function $g(s,t)$ appears in \eqref{eq: martingale 1}, our first goal in this paper is to present a numerical solution for $g(s,t)$ when $t$ fixed with the formula of the conditional expectation \eqref{eq: martingale}.

The second contribution of this paper is the estimation of ruin probability driven by mfBm using this numerical solution. Let us observe that mfBm besides captures the fluctuation of financial assets (see, for example, \cite{Cheridito}), it can also explain the phenomenon of the insurance surplus model. In ruin theory, the classical risk model plays the central role in the theoretical analysis in ruin theory, and lots of nice results have been obtained by actuarial researchers, (for example, see \cite{Dufresne}, \cite{Asmussen}). In the classical risk model, the insurance surplus is always supposed as
$$
X_t=u+c t+\sigma W_t-\sum_{i=1}^{N_t}U_i\,,
$$
where $u$ is a constant and $\sum\limits_{i=1}^{N_t}U_i$ is considered as a compound poisson process with a fixed intensity $\lambda$. In contrast to the classical, independent, identically distributed assumptions, we are interested in the case where $(U_k: k\in \mathbb{N})$ are stongly dependent. In queueing systems, the relevance of such dependence assumptions is currently receiving much attention. We shall construct a sequence of risk processes with the strongly dependence  and show that it converges to a surplus process driven by mfBm.  It should be noted that there are many works concentrated on the convergence to fractional Brownian motion such as 
\cite{Taqqu,Natha, Bill} and here we take  \cite{Michna98a} for example to explain how it works and what is the unknown parameter.

Now let us  consider a sequence of surplus process $X^{(n)}=(X_t^{(n)})_{t\geq 0}$ given by
$$
X_t^{(n)}=u^{(n)}+c^{(n)}t-\sigma^{(n)} W_t-\sum_{k=1}^{N_t^{(n)}}U_k^{(n)}\,,
$$
where $x^{(n)}>0$, $c^{(n)}>0$ denote the initial risk reserve and the premium rate, $N_t^{(n)}$ is the sequence of Poisson process with intensity $n$ and $\sigma^{(n)}>0$ . We assume that the claims are of the form $U_k^{(n)}=\frac{1}{\varphi(n)}U_k$ where $(U_k,k\in \mathbb{N})$ is a stationary sequence with common distribution $F$ and mean $\mu$ such that:
$$
\frac{1}{\varphi(n)}\sum_{k=1}^{[nt]}(U_k-\mu)\Longrightarrow B_t^H\,,
$$
where $B_t^H$ is a fractional Brownian motion with Hurst parameter $H>1/2$ and the symbol $``\Longrightarrow"$ denotes the weak convergence in Skorokhod topology (see \textit{e.g.} \cite{Bill}). Here we suppose $\varphi(n)=n^HL(n)$ and the function $L$ is slowly varying at infinity.  By the Theorem 3 of \cite{Michna98a} we know that:
\begin{lem}\label{cov to model}
Let $N_t^{n}$ be a sequence of Poisson process with intensity n such that
$$
\frac{N_t^{(n)}-nt}{\varphi(n)}\rightarrow 0\,,
$$
in probability in the Skorokhod topology. Assume also that
$$
\lim_{n\rightarrow\infty}\left(c^{(n)}-\frac{n\mu}{\varphi(n)}\right)=\vartheta\,,
$$
and
$$
\lim_{n\rightarrow \infty}u^{(n)}=u,\, \lim_{n\rightarrow \infty}\sigma^{(n)}=\sigma\,.
$$
Then
\begin{equation}\label{eq: cov to model}
X_t^{(n)}=u^{(n)}+c^{(n)}t-\sigma^{(n)}W_t-\sum_{k=1}^{N_t^{(n)}}U_k^{(n)}\Longrightarrow u+\vartheta t-\sigma W_t-B_t^H\,,
\end{equation}
in the Skorokhod topology as $n\rightarrow \infty$.
\end{lem}
From now on, we consider the following surplus process 
\begin{equation}\label{eq: surplus}
X_t=u+\vartheta t-\xi_t,\, t\geq 0\,,
\end{equation}
where
$$
\xi_t=\sigma W_t+B_t^H.
$$
Let us mention that the main properties of $\xi$ will not depend on the parameter $\sigma$, consequently, for further study we always assume $\sigma=1$ and now the process $\xi$ is the mixed fractional Brownian motion. Obviously, in Lemma \ref{cov to model}, the parameters $H$ and $\vartheta$ are unknown because the function $L$ is unknown. For $H$, there exist so many works about the estimation such as  the general Maximum Likelihood estimation, Whittle's method \cite{Jan} and the power variations especially the quadratic variation \cite{Mishura}. Consequently, without loss of generality, for further statistical analysis, we assume that $H$ is known and always $H>1/2$. The main work of our article will be the estimation of the ruin probability with the unknown parameters using the past surplus data and study its asymptotic properties using the Delta method. We define the finite time ruin probability with $\psi(u,T)$ in the time interval $[0,T]$:
$$
\psi(u,T)=\mathbf{P}\left(\inf_{t\in [0,T]}X_t<0\right)\,.
$$
We can state it by another way
\begin{equation}\label{ruin stopping time}
\psi(u,T)=\mathbf{P}(\tau\leq T)=\mathbf{E}\left[\mathbf{1}_{\left\{\left[\sup\limits_{0\leq t\leq T}( \xi_t- \vartheta t)\right]>u\right\}} \right]\,,
\end{equation}
where $\tau=\inf\{t>0|X_t<0\}$ is the time of ruin for surplus $X$. For $T=\infty$ it is called ultimate ruin probability and we will use $\psi(u)$ replacing $\psi(u, T=\infty)$. When using the Delta method to study the asymptotic properties of $\psi(u)$ which depends on $\vartheta$, we have to deal with the problem of the estimation of $\vartheta$ and the formula of $\partial_{\vartheta}\psi(u)$. In Section \ref{sec:Girsanov} we will see that the ML estimator of $\vartheta$ depends on the function $g(s,t)$, that is why we have to present the simulation of this function.

The rest of the paper is organized as follows. In Section \ref{sec:simulation}, we consider the problem of simulating the function of $g(s,t)$. In section \ref{sec:Girsanov}, we present the Girsanov formula for the mfBm. The asymptotic behaviour of $\eta_a(v)$ defined in \eqref{eq: eta a} and the problem of estimating $\vartheta$ are also discussed in this section. The proofs of some main results will be presented in Section \ref{sec: Appendix}.

\section{Simulation of the integro-differential equation}\label{sec:simulation}
\subsection{Some spaces and operators}
When the function $g(s,t)$ appears in the equation \eqref{eq: martingale 1}, the fundamental martingale --a conditional expectation, the numerical method of course will be based on the $L^2$ projection. Throughout we assume that all the random variables and stochastic processes are supported on a probability space $(\Omega, \mathcal{F}, \mathbf{P})$. Now for  $f:[0,t]\rightarrow \mathbb{R}$, define the operators 
$$
(\Psi f)(s,t)=-2H\frac{d}{ds}\int_s^t f(r) r^{H-1/2}(r-s)^{H-1/2}dr,\, 0\leq s\leq t
$$
and the space
$$
\Lambda_t^{H-1/2}:=\left\{f:[0,t]\rightarrow \mathbb{R}: \int_0^t(s^{1/2-H}(\Psi f)(s,t))^2ds<\infty,         \right\}
$$
with the scalar product
$$
\langle f,g \rangle_{\Lambda_t^{H-1/2}}:= \frac{2-2H}{\lambda_H}\int_0^t s^{1-2H}(\Psi f)(s,t)(\Psi g)(s,t)ds.
$$
From Lemma 3.9 in \cite{CCK} we know that for fixed $t$ the function $g(\cdot, t)\in L^2([0,t])\cap \Lambda_t^{H-1/2}$ then the stochastic integral $\int_0^t g(s,t)d\xi_s$ can be defined in a usual way presented in \cite{Taqqu}. When $\xi$ is a Gaussian process and with the Normal Correlation Theorem we have 
$$
\mathbf{E}\left(W_t-\int_0^t g(s,t)d\xi_s\right)\xi_t=0.
$$

\begin{rem}
To find the exact formula of $g(s,t)$ when $H<1/2$, we have to choose the stochastic integral with respect to $\xi$ with an arbitrary test function as the integrand. In this way, the expectation 
$$
\mathbf{E}\int_0^t f(s)dB_s^H\int_0^t g(s)dB_s^H\neq H(2H-1)\int_0^t \int_0^t f(u)g(v)|u-v|^{2H-2}dudv
$$
and the details will be  explained in \cite{CCK}. But for the simulation, we only need to choose $\xi_t$ or the constant function 1 as the test function, so the only condition needed is $g\in L^2([0,t])\cap \Lambda_t^{H-1/2}$.
\end{rem}
\subsection{Numerical approximation}
To illustrate the procedure of the simulation, we need the following Lemma:
\begin{lem}\label{th: simu}
For t fixed we define $t_i=ti/2^n,\, i=0,\cdots, 2^n$ and $\mathcal{F}^{\xi}_{t,n}=\sigma \{\xi_{t_i}-\xi_{t_{i-1}},\, i=1.\cdots,2^n\}$ then 
\begin{equation}\label{eq: N1}
\mathbf{E}(W_t|\mathcal{F}_{t,n}^{\xi})=\sum_{i=1}^{2^n}g_{i-1}^n(\xi_{t_i}-\xi_{t_{i-1}})
\end{equation}
with constants $g_{i-1}^n,\, i=1,\cdots, 2^n$. Moreover, if we define the equation 
\begin{equation}\label{eq: N2}
g_n(s,t):=\sum_{i=1}^{2^n}g_{i-1}^n\mathbf{1}_{\{s\in[t_{i-1},t_i)\}}
\end{equation}
then $g_n(s,t)\xrightarrow {n\rightarrow \infty}g(s,t)$ with $g(s,t)$ the unique solution of \eqref{eq: integro-differential}.
 \end{lem}
 \begin{proof}
 When $\mathbf{E} W_t=0$, the equation \eqref{eq: N1} can be easily obtained by the normal correlation theorem. Following the arguments of the proof of Lemma 10.1 in \cite{LS01}, $\mathcal{F}_{t,n}^{\xi}\nearrow\mathcal{F}_t^{\xi}$ and by the martingale convergence
 $$
 \lim_n\mathbf{E}(W_t|\mathcal{F}_{t,n}^{\xi})=\mathbf{E}(W_t|\mathcal{F}_t^{\xi})=\int_0^t g(s,t)d\xi_s,\,\, \mbox{in} \\\ L^2(\Omega,\mathbf{P})
 $$
 then the convergence of \eqref{eq: N2} can be proved with the same way of Lemma 3.9 of \cite{CCK}.
\end{proof}
To simplify the procedure, we just divide the interval $[0,t]$ with n parts and denote $\Delta \xi_i=\xi_{t_i}-\xi_{t_{i-1}}$ with $t_i=it/n$, then the conditional expectation \eqref{eq: N1} can be presented by 
$$
\mathbf{E}(W_t|\Delta \xi_1,\,\cdots,\, \Delta \xi_n)=\sum_{i=1}^n\gamma_i \Delta \xi_i.
$$
The vector $\gamma=\left(\begin{array}{ccc}\gamma_1 & \cdots & \gamma_n\end{array}\right)$  is the solution of  the linear equations
$$
\mathbf{E}(W_t-\sum_{i=1}^n\gamma_i \Delta \xi_i)\Delta \xi_i=0,\, i=1,2,\cdots,n. 
$$
We can easily obtain
$$
\gamma=\frac{t}{n}\mathbf{1}^* R_n^{-1}
$$
where $R_n$ is a n-order symmetric matrix with 
\begin{equation}\label{eq: partial derivative}
R_n^{i,j}=\left\{
\begin{array}{l}
t/n+(t/n)^{2H},\,\, i=j\\
\frac{1}{2}(t/n)^{2H}\left(|i-1-j|^{2H}+|j-1-i|^2H-2|i-j|^{2H}\right),\,\, others\\
\end{array}
\right.
\end{equation}
and $\mathbf{1}^*=\left(\begin{array}{ccc}1 & \cdots & 1\end{array}\right)$. Obviously, the vector $\gamma$ is our numerical solution of $g(s,t)$ for $t$ fixed. 

\subsection{The simulation results for long memory and rough path}
As we know, when $H>1/2$, the fractional Brownian motion presents the properties of long memory and for $H<1/2$ the properties of rough path. We take two different $H$ to present the simulation results of the function $g(s,t)$. In the following simulations we fix $t=1$ and the distance of the function will be $d=1/3000$.

\begin{center}
\resizebox{150mm}{70mm}{\includegraphics{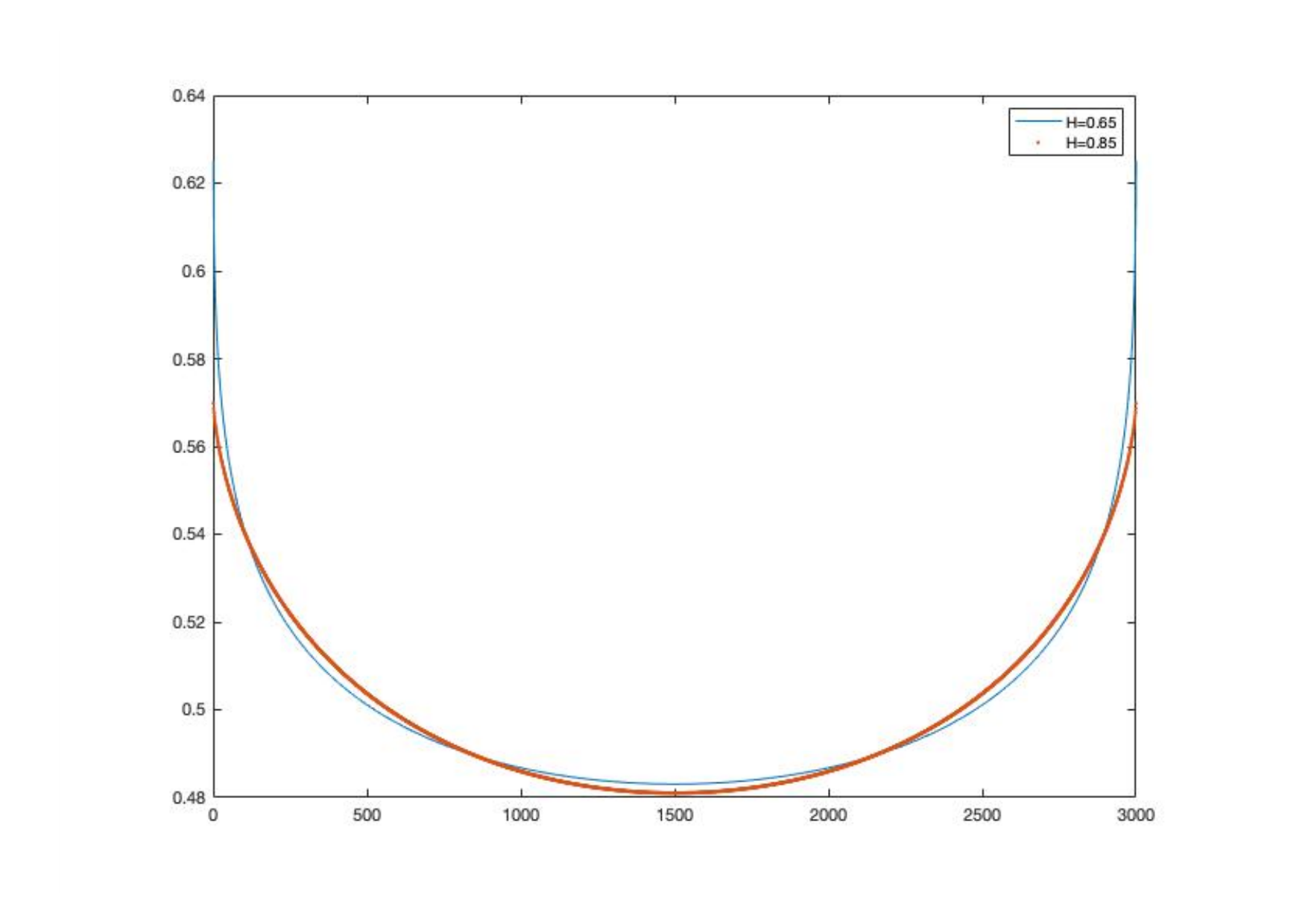}}\\
Fig.1. For $H>1/2$.
\end{center}

\begin{center}
\resizebox{150mm}{70mm}{\includegraphics{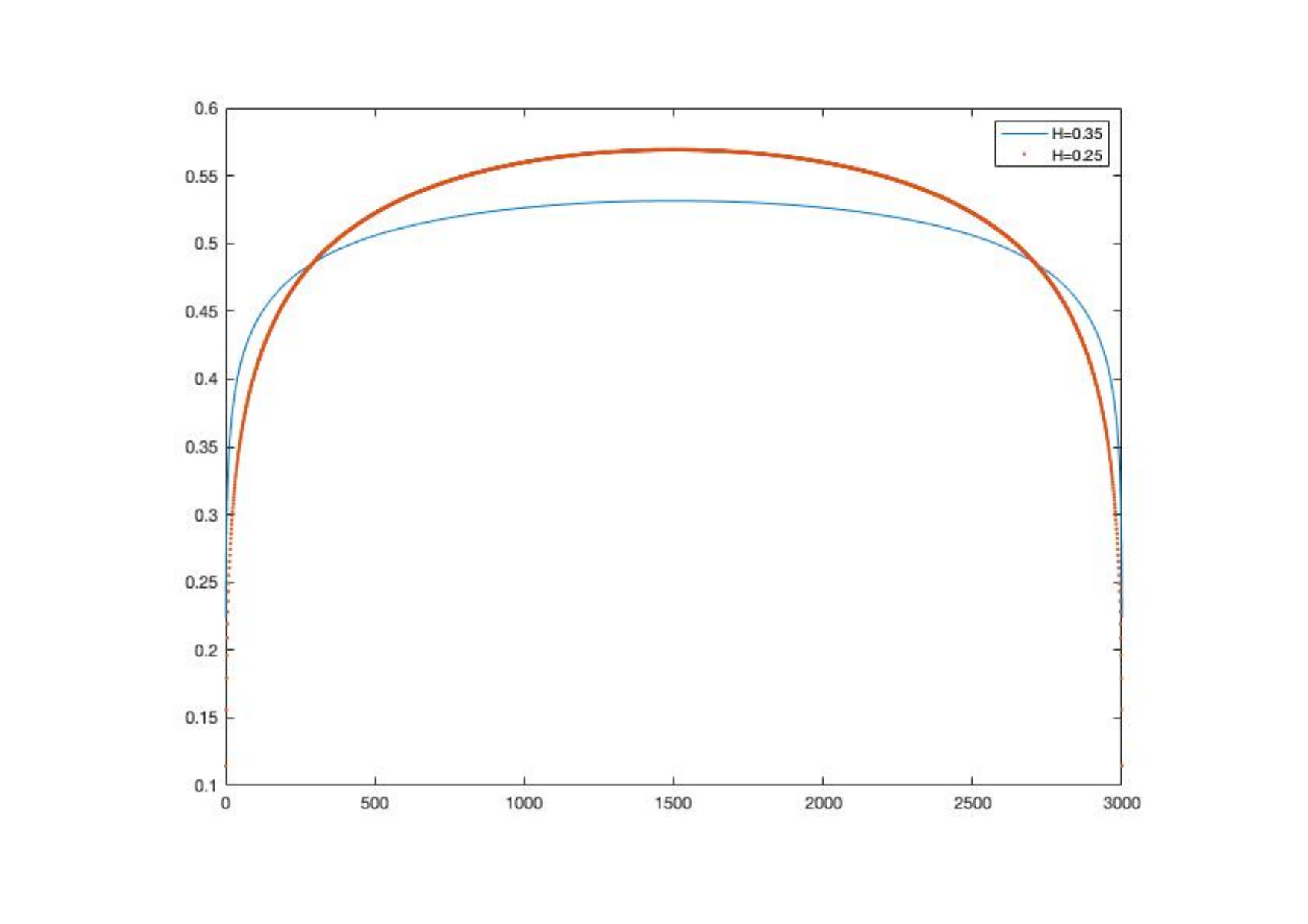}}\\
Fig.2. For $H<1/2$.
\end{center}

Is our simulation reasonable or not, How to verify it? Obviously, we have no possibility to find the analytical solution even for $H>1/2$, so the simplest way to compare with the theoretical solution is not feasible and  in order to check this simulation reasonable or not we need another tool. Fortunately, in the procedure of the drift estimation of linear regression model (see the last part of \cite{CK}) which will also be used later, we have the following Lemma:
\begin{lem}\label{Lemma limit performance}
For $H\in (0,1)$ and $H\neq 1/2$, If the function $g(t,T),\, 0\leq t\leq T$ is the unique solution of the integro-differential equation \eqref{eq: integro-differential} that is 
$$
g(t,T)+H\frac{d}{dt}\int_0^T g(r,T)|r-t|^{2H-1}=1,\, 0\leq t \leq T
$$
then the limit behavior of quadratic variation $\langle M\rangle_T=\int_0^T g(t,T)dt$ defined in \eqref{eq: martingale 1} differs from $H$:
\begin{itemize}
\item for $H>1/2$,
\begin{equation}\label{eq: asym variance bigger}
T^{2-2H}\frac{1}{\langle M\rangle_T} \xrightarrow{T\rightarrow \infty}\frac{2H\Gamma(H+1/2)\Gamma(3-2H)}{\Gamma(3/2-H)}.
\end{equation}
\vspace{1mm}
\item for $H<1/2$,
\begin{equation}\label{eq: asym variance smaller}
\frac{T}{\langle M\rangle_T}\xrightarrow {T\rightarrow \infty}1.
\end{equation}
\end{itemize}
\end{lem}
\begin{proof}
In \cite{CK}, the authors use the limits of the eigenvalues and eigenfunctions of the covariance operator of fractional Brownian motion to get the results. In fact we have an easy way that is for $H>1/2$, 
$$
T^{2-2H}\frac{1}{\langle M\rangle_T}=\frac{1}{\int_0^1 g_{\varepsilon}(u)du}\xrightarrow{\varepsilon \rightarrow 0} \frac{1}{\int_0^1g(u)du}
$$
where $g_{\varepsilon}(u)$ is the solution of 
$$
\varepsilon g_{\varepsilon} (u)+H(2H-1)\int_0^1 g_{\varepsilon}(v)|u-v|^{2H-2}dv=1,\, u\in [0,1]
$$
and $g(u)$ is the unique solution of limit equation of $g_{\varepsilon}(u)$ when $\varepsilon\rightarrow 0$:
$$
H(2H-1)\int_0^1 g(u)|u-v|^{2H-2}du=1.
$$
The similar proof can be achieved for $H<1/2$.
\end{proof}

From Lemma \ref{Lemma limit performance}, we use the numerical solution of Lemma \ref{th: simu} and show the performance of \eqref{eq: asym variance bigger} and \eqref{eq: asym variance smaller} for $H=0.65$ and $H=0.35$ in the following Table. We can see that when $T$ is larger the simulation is closer to the theoretical result.

\begin{center}
Table 1. Simulation results for different values of $H$ and $T$ of Lemma \ref{Lemma limit performance}.
\vspace{1mm}\\
\begin{tabular}{|@{}c|c|c|c|c|}
\hline
\diagbox[width=5em,trim=l]{$H$}{$T$} & 200 & 1000 & 5000 & limitation\\
\hline
0.65 & 0.9701 & 0.9812 & 0.9869 & 0.9907\\
\hline
0.35 & 1.1080  & 1.0605 & 1.0328 & 1  \\
\hline
\end{tabular}
\end{center}

\begin{rem}
From the previous table we are confident that our method of simulation will adapt the real values of function \eqref{eq: integro-differential}. However, the convergence rate is still an open problem and we leave it for further study.
\end{rem}

\section{Girsanov Formula and finite-time ruin probability}\label{sec:Girsanov}

\subsection{Girsanov formula}
Now we will introduce the Girsanov formula for the mfBm. Let  $\Xi_t(M)$  be the stochastic exponent of M:
$$
\Xi_t(M):=\exp \left(M_t-\frac{1}{2}\langle M, M\rangle_t \right)\,.
$$
Thus it is a martingale with mean 1. For $a\in \mathbb{R}$, let $\mathbf{P}_a$ be a probability on $(\Omega, \mathcal{F}, (\mathcal{F}_t)_{t\geq 0})$ defined by
\begin{equation}\label{Pa}
\frac{d\mathbf{P}_a}{d\mathbf{P}}=\Xi_T(aM)
\end{equation}
on $\mathcal{F}_T$. As in \cite{NVV99}, we will prove that $\Xi_T(aM)$ is in fact the likelihood ratio between the following two hypotheses:

$H_0$: With respect to the measure $\mathbf{P}$, the process $X$ is a mfBm with Hurst parameter $H$,  i.e.  $\bar{X}_t=\xi_t$.

$H_a$: With respect to the measure $\mathbf{P}_a$, the process X is a mfBm with constant drift a, i.e. $\bar{X}_t=\xi_t+at$.

\begin{thm}\label{Girsanov formula}
With respect to the measure $\mathbf{P}_a$, the process $\xi$ is a mfBm with drift a, i.e. the distribution of $\xi$ with respect to $\mathbf{P}_a$ is the same as the distribution of $\xi_t+at$ with respect to $\mathbf{P}=\mathbf{P}_0$.
\end{thm}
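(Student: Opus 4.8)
The plan is to transfer the problem from the mfBm $\xi$ to its fundamental martingale $M$, where the classical Girsanov theorem for continuous martingales applies directly, and then to pull the conclusion back through the deterministic reconstruction \eqref{inverse}. The whole argument rests on the fact that $\xi$ and $M$ generate the same filtration $(\mathcal{F}_t^{\xi})$ and that, by \eqref{fund martingale 1} and \eqref{inverse}, the maps $\xi\mapsto M$ and $M\mapsto\xi$ are deterministic, measure-independent, and mutually inverse. Consequently the law of $\xi$ computed under any given measure is completely determined by the law of $M$ under that measure, and it suffices to identify the $\mathbf{P}_a$-law of $M$.

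First I would record how a deterministic drift on $\xi$ is seen by $M$. Writing $\xi_t^a := \xi_t + at$ and using the linearity of \eqref{fund martingale 1}, replacing $d\xi_s$ by $d\xi_s + a\,ds$ gives $\frac{1}{\sigma}\int_0^t g(s,t)(d\xi_s + a\,ds) = M_t + \frac{a}{\sigma}\int_0^t g(s,t)\,ds$. By \eqref{covariance} the added term is exactly $a\langle M\rangle_t$, so the fundamental martingale of $\xi^a$ is $M_t^a := M_t + a\langle M\rangle_t$. Since $\langle M\rangle_t = \int_0^t g^2(s,s)\,ds$ is deterministic, $M^a$ is nothing but $M$ shifted by a deterministic, absolutely continuous drift.

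Next I would apply the Girsanov theorem to the continuous $(\mathcal{F}_t^{\xi})$-martingale $M$ under the change of measure \eqref{Pa}, where $\Xi_T(aM) = \exp\left(aM_T - \frac{a^2}{2}\langle M\rangle_T\right)$. Because $M$ is a Gaussian martingale with finite deterministic quadratic variation $\langle M\rangle_T$, the Novikov condition $\mathbf{E}\exp\left(\frac{a^2}{2}\langle M\rangle_T\right)<\infty$ holds trivially, so $\Xi_T(aM)$ is a genuine density and $\mathbf{P}_a$ a probability measure. Girsanov then guarantees that $N_t := M_t - a\langle M\rangle_t$ is a continuous $\mathbf{P}_a$-martingale with unchanged quadratic variation $\langle N\rangle_t = \langle M\rangle_t$. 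A continuous martingale with deterministic quadratic variation is a time-changed Brownian motion, hence Gaussian, so $N$ under $\mathbf{P}_a$ has exactly the law of $M$ under $\mathbf{P}$. Equivalently, $M$ under $\mathbf{P}_a$ has the same law as $M^a = M + a\langle M\rangle$ under $\mathbf{P}$.

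Finally I would combine the two steps through the reconstruction $\xi_t = \int_0^t G(s,t)\,dM_s$ of \eqref{inverse}: under $\mathbf{P}_a$ the process $\xi$ is the image of $M$, whose $\mathbf{P}_a$-law equals the $\mathbf{P}$-law of $M^a$, while the image of $M^a$ under the same deterministic map is, by construction, the drifted process $\xi_t + at$. Hence the law of $\xi$ under $\mathbf{P}_a$ coincides with the law of $(\xi_t + at)$ under $\mathbf{P}$, which is the claim. The main obstacle is not the Girsanov step but the two structural facts that make the transfer legitimate: that a drift on $\xi$ maps to the \emph{explicit} deterministic drift $a\langle M\rangle$ on $M$ (this is where \eqref{covariance} is essential), and that the reconstruction map \eqref{inverse} is genuinely measure-independent, so that equality of the laws of $M$ forces equality of the laws of $\xi$. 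Making this last point rigorous, namely that \eqref{fund martingale 1}--\eqref{inverse} furnish a measurable bijection on path space with $\langle M\rangle$ invariant under the measure change, is the delicate part of the argument.
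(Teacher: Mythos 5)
Your proposal is correct, but it takes a genuinely different route from the paper's own proof. The paper never invokes the classical Girsanov theorem for continuous martingales, nor any path-space bijection between $\xi$ and $M$: it verifies the claim directly on finite-dimensional distributions. Fixing $t_1,\dots,t_n$, it writes
$\mathbf{E}_a\exp\bigl(\sum_i\alpha_i\xi_{t_i}\bigr)=\mathbf{E}_0\exp\bigl(\sum_i\alpha_i\xi_{t_i}+aM_T-\tfrac{a^2}{2}\langle M\rangle_T\bigr)$
and evaluates the right-hand side as the moment generating function of the single Gaussian variable $U=\sum_i\alpha_i\xi_{t_i}+aM_T$ under $\mathbf{P}_0$; the crux is the covariance identity $\cov_0(M_t,\xi_s)=s$ for $s\le t$, which drops out of the defining equation \eqref{kernel g} and makes the cross terms in $\mathbf{E}_0U^2$ contribute exactly the drift $a\sum_i\alpha_i t_i$ in the exponent, so that the MGF of $\xi$ under $\mathbf{P}_a$ is that of $\mathcal{N}(a\tau,\Sigma)$. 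Your argument instead works at the level of processes: a drift $at$ on $\xi$ becomes the deterministic drift $a\langle M\rangle_t$ on $M$ (via $\langle M\rangle_t=\tfrac{1}{\sigma}\int_0^t g(s,t)\,ds$), classical Girsanov plus the fact that a continuous martingale with deterministic bracket is Gaussian identifies the $\mathbf{P}_a$-law of $M$, and the reconstruction \eqref{inverse} transports the conclusion back to $\xi$. Each approach has a payoff. Yours is structurally cleaner, explains why \eqref{Pa} is the natural density, and would generalize verbatim to any mixed Gaussian process admitting a fundamental martingale with deterministic bracket. The paper's is computationally self-contained: every expectation is taken under the single measure $\mathbf{P}_0$, so it never has to confront the point you yourself flag as delicate, namely that the Wiener-integral maps \eqref{fund martingale 1} and \eqref{inverse} are measure-independent. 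That issue is genuine in your route---these integrals are defined as $L^2(\mathbf{P})$-limits, and you must argue that the same random variables realize the maps under $\mathbf{P}_a$ (which does follow, either because $\mathbf{P}_a\sim\mathbf{P}$ makes limits in probability coincide, or because the regularity of $g(\cdot,t)$ permits a pathwise integration by parts)---so a fully rigorous write-up of your proof needs this lemma, whereas the paper's computation bypasses it entirely. It is worth noting that both proofs ultimately feed on the same identity \eqref{kernel g}: you use it through the formula for $\langle M\rangle$, the paper through the covariance $\cov_0(M_t,\xi_s)$.
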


\begin{rem}
Theorem \ref{Girsanov formula} tells us  that when $(\xi_t)_{0 \leq t\leq  T}$ is a mfBm with parameter $H$ under $\mathbf{P}$, then the process $\xi_t(a):= \xi_t-at$, $0\leq t \leq T$ is a mfBm under $\mathbf{P}_a$.
\end{rem}

\subsection{Finite-time and Ultimate ruin probability}
We will rewrite the finite ruin probability \eqref{ruin stopping time} with the formula of expectation. To achieve this goal, we define a stopping time $\eta_a(v)=\inf \{t>0|\xi_t+at>v\}$, it can be rewritten by 
\begin{equation}\label{eq: eta a}
\eta_a(u)=\inf \{t>0,\xi_t+(\vartheta+a)t-\vartheta t>u\},
\end{equation}
where due to Theorem \ref{Girsanov formula} the process $\xi_t+(\vartheta+a)t$ is a mfBm under the probability $\mathbf{P}_{-(\vartheta+a)}$. Moreover, a standard calculation yields
\begin{equation*}
\begin{aligned} 
\psi(u,T) & = \mathbf{P}(\tau \leq T) \\
   & = \mathbf{E}_{-(\vartheta+a)}\left[\mathbf{1}_{\eta_a(u)\leq T}\right]\\
   & = \mathbf{E}\left[ \Xi_{T}(-(a+\vartheta)M) \mathbf{1}_{\eta_a(u)\leq T}  \right]\\
   & = \mathbf{E}\left[\mathbf{E}\left[ \Xi_T(-(a+\vartheta)M)|\mathcal{F}_{\eta_a(u)\wedge T}             \right] \mathbf{1}_{\eta_a(u)\leq T}\right]\\
   & =\mathbf{E}\left[\Xi_{\eta_a(u)}(-(a+\vartheta)M)\mathbf{1}_{\eta_a(u)\leq T}\right].
\end{aligned}
\end{equation*}
Thus, for any $a\in \mathbb{R}$, we have
\begin{equation}\label{finite ruin exact}
\psi(u,T)=\mathbf{E}\left[\exp\left(-(a+\vartheta)\int_0^{\eta_a(u)} g(s,\eta_a(u))d\xi_s-\frac{(a+\vartheta)^2}{2}\int_0^{\eta_a(u)}g(s,\eta_a(u))ds    \right) \mathbf{1}_{\{\eta_a(u)\leq T\}}\right]\,.
\end{equation}
In the case of $H>1/2$ when $\int_0^t g(s,t)ds=\int_0^t g^2(s,s)ds$ we have
\begin{equation}\label{finite ruin exact 1}
\psi(u,T)=\mathbf{E}\left[\exp \left(-(a+\vartheta)\int_0^{\eta_a(u)}  g(s,\eta_a(u))d\xi_s-\frac{(a+\vartheta)^2}{2}\int_0^{\eta_a(u)}g^2(s,s) ds \right)\mathbf{1}_{\{\eta_a(u)\leq T\}}\right]
\end{equation}
For the $u>0$ fixed, we have
$$
\lim_{a\rightarrow \infty}\mathbf{1}_{\{\eta_a(u)\leq T\}}=1,\,a.s
$$
Thus for  large enough $a$, it will be better to simulate the $\psi(u,T)$ by the Monte Carlo procedure. In particular, letting $T\rightarrow \infty$ for  both sides of \eqref{finite ruin exact} , the monotone convergence theorem yields an expression of ultimate ruin probability: for any $a>0$,
$$
\psi(u)=\mathbf{E}\left[\exp\left(-(a+\vartheta)\int_0^{\eta_a(u)} g(s,\eta_a(u))d\xi_s-\frac{(a+\vartheta)^2}{2}\int_0^{\eta_a(u)}g(s,\eta_a(u))ds    \right) \right]\,.
$$

\subsection{Asymptotic Properties of $\eta_a(v)$}\label{section 3}
We have got the exact expression of the ruin probability, but we have no efficient information about $\eta_a(v)$. What it will perform in the ruin? In this part, we will discuss the asymptotic behaviour of $\eta_a(v)$ for a large $v$ fixed when $H>1/2$. First of all, we have the upper bound of the expectation of $\eta_a^n(v)$.

\begin{thm}\label{upper bound}
Let $a>0$ and $\frac{1}{2}<H<1$ and $n\in \mathbb{N}$. Then for all $v>0$, we have
$$
\mathbf{E}\eta_a^n(v)\leq \frac{1}{\sqrt{2\pi}}\left(\frac{vH'}{\sqrt{2}(n-H')}L_n(v/\sqrt{2},H',a/\sqrt{2})+\frac{(1-H')a}{\sqrt{2}(n+1-H')} L_{n+1}(v/\sqrt{2},H',a/\sqrt{2}) \right)\,,
$$
where
\begin{equation}\label{Ln}
L_n(u,H,a)=\int_0^{\infty}\exp\left\{-\frac{1}{2}\left(ut^{-H/(n-H)}-at^{(1-H)/(n-H)}\right)^2\right\}dt
\end{equation}
and  $H'=\frac{2H+1}{4}$.
\end{thm}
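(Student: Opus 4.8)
The plan is to reduce the moment to the one–dimensional Gaussian marginal of $\xi$ and then to a purely fractional computation. First I would write
$$\mathbf{E}\eta_a^n(v)=\int_0^\infty n t^{n-1}\mathbf{P}(\eta_a(v)>t)\,dt,$$
and use the inclusion $\{\eta_a(v)>t\}=\{\sup_{0\le s\le t}(\xi_s+as)\le v\}\subseteq\{\xi_t+at\le v\}$ to get $\mathbf{P}(\eta_a(v)>t)\le\mathbf{P}(\xi_t+at\le v)$. Since $\xi_t=W_t+B_t^H$ is centred Gaussian with variance $t+t^{2H}$, this marginal equals $\Phi\!\big((v-at)/\sqrt{t+t^{2H}}\big)$, with $\Phi$ the standard normal distribution function. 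The exponent $H'=(2H+1)/4$ appears naturally here: the arithmetic–geometric mean inequality gives $t+t^{2H}\ge 2\sqrt{t\cdot t^{2H}}=2t^{2H'}$, so the standard deviation of $\xi_t$ is bounded below by $\sqrt2\,t^{H'}$, which is exactly the standard deviation of $\sqrt2\,B^{H'}$.

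The idea is then to replace $\sqrt{t+t^{2H}}$ by $\sqrt2\,t^{H'}$ inside $\Phi$, turning the integrand into the marginal of a single fractional Brownian motion of Hurst index $H'$ with $v$ and $a$ rescaled by $1/\sqrt2$; that is, to establish
$$\mathbf{E}\eta_a^n(v)\le \int_0^\infty n t^{n-1}\,\Phi\!\left(\frac{v/\sqrt2-(a/\sqrt2)\,t}{t^{H'}}\right)dt .$$
This reduces the statement to the pure fractional estimate (cf. \cite{Michna98b}) with parameters $\tilde v=v/\sqrt2$, $\tilde a=a/\sqrt2$, $\tilde H=H'$.

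For the right–hand integral I would integrate by parts with $u=\Phi(h(t))$, $h(t)=\tilde v\,t^{-H'}-\tilde a\,t^{1-H'}$, and $dw=n t^{n-1}dt$, $w=t^n$. The boundary terms vanish: at $t=0$ because $t^n\to0$, and at $t=\infty$ because $h(t)\to-\infty$ forces a Gaussian decay that dominates $t^n$. What remains is
$$\frac{1}{\sqrt{2\pi}}\int_0^\infty\big(\tilde v H'\,t^{\,n-H'-1}+\tilde a(1-H')\,t^{\,n-H'}\big)\exp\!\Big(-\tfrac12\big(\tilde v t^{-H'}-\tilde a t^{1-H'}\big)^2\Big)dt .$$
The two powers of $t$ are tuned to two substitutions: $s=t^{\,n-H'}$ sends the first summand to $\frac{\tilde v H'}{\,n-H'}L_n(\tilde v,H',\tilde a)$, and $s=t^{\,n+1-H'}$ sends the second to $\frac{\tilde a(1-H')}{\,n+1-H'}L_{n+1}(\tilde v,H',\tilde a)$, each substitution reproducing exactly the integrand defining $L_n$ and $L_{n+1}$. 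Substituting back $\tilde v=v/\sqrt2$, $\tilde a=a/\sqrt2$ yields the two terms in the statement.

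The step I expect to be the real obstacle is the variance replacement inside $\Phi$. Since $\sigma\mapsto\Phi\big((v-at)/\sigma\big)$ is decreasing in $\sigma$ when $v-at>0$ but increasing when $v-at<0$, the pointwise inequality $\Phi\big((v-at)/\sqrt{t+t^{2H}}\big)\le\Phi\big((v-at)/(\sqrt2\,t^{H'})\big)$ is immediate only on $\{t\le v/a\}$; for $t>v/a$ the monotonicity reverses and this regime must be handled separately, so that the clean integrated inequality of the second paragraph holds. Controlling that tail—where $\xi_t+at$ lies below its mean and the marginal is a small Gaussian tail—so that the gain on $\{t\le v/a\}$ compensates, is the crux; the integration by parts and the two substitutions are then routine.
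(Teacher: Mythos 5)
Your proposal is, step for step, the paper's own proof: the reduction to the one--dimensional marginal via $\{\eta_a(v)>t\}\subseteq\{\xi_t+at\le v\}$, the bound $t+t^{2H}\ge 2t^{2H'}$ with $H'=\frac{2H+1}{4}$, the replacement of $\sqrt{t+t^{2H}}$ by $\sqrt{2}\,t^{H'}$ inside $\Phi$, the integration by parts with vanishing boundary terms, and your two substitutions (the paper parametrizes the moment as $\int_0^\infty\mathbf{P}(\eta_a(v)>t^{1/n})\,dt$ and substitutes $t=s^{n/(n-H')}$ and $t=s^{n/(n+1-H')}$, which is the same computation) producing exactly the $L_n$ and $L_{n+1}$ terms. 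The only difference is that the step you single out as the crux is performed in the paper with no justification beyond ``$\Phi$ is non-decreasing and $t+t^{2H}\ge 2t^{(2H+1)/2}$'', i.e.\ the paper silently uses the pointwise inequality
$$
\Phi\left(\frac{v-at}{\sqrt{t+t^{2H}}}\right)\;\le\;\Phi\left(\frac{v}{\sqrt{2}}\,t^{-H'}-\frac{a}{\sqrt{2}}\,t^{1-H'}\right)
\qquad\text{for all } t>0,
$$
which, as you observe, follows from monotonicity of $\Phi$ only on $\{t\le v/a\}$, where $v-at\ge0$.

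Your suspicion that the region $t>v/a$ is not a removable technicality is correct: the inequality above, and indeed the pointwise bound $\mathbf{P}(\eta_a(v)>t)\le\Phi\bigl(\frac{v}{\sqrt2}t^{-H'}-\frac{a}{\sqrt2}t^{1-H'}\bigr)$ itself, is false for large $t$. The right-hand side decays like $\exp\bigl(-\frac{a^2}{4}t^{2-2H'}(1+o(1))\bigr)$, while already the (valid) marginal bound $\Phi\bigl((v-at)/\sqrt{t+t^{2H}}\bigr)$ decays only like $\exp\bigl(-\frac{a^2}{2}t^{2-2H}(1+o(1))\bigr)$; since $H'<H$ for $H>\frac12$, the exponent $t^{2-2H'}$ grows strictly faster than $t^{2-2H}$, so the comparison eventually reverses. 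Moreover $\mathbf{P}(\eta_a(v)>t)$ really is of the slower order: a Cameron--Martin shift by the drift $-2as$ (whose cost is $\exp(-Ca^2\langle M\rangle_t)$ with $\langle M\rangle_t\asymp t^{2-2H}$, cf.\ Theorem \ref{Girsanov formula} and \eqref{asy variance}), combined with the positivity of $\mathbf{P}\bigl(\sup_{s\ge0}(\xi_s-as)\le v\bigr)$, gives $\mathbf{P}(\eta_a(v)>t)\ge c\,e^{-Ca^2t^{2-2H}}$, which eventually exceeds the claimed bound. Consequently neither your argument nor the paper's proves the theorem as stated for every fixed $v>0$: what both arguments establish is the inequality with the time integral restricted to $[0,v/a]$, and the tail $\int_{v/a}^\infty nt^{n-1}\mathbf{P}(\eta_a(v)>t)\,dt$ must be controlled separately. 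By the marginal bound that tail is $O(v^{\,n-1+H})=o(v^n)$ as $v\to\infty$, so the only downstream use of the theorem (the limits in Theorem \ref{asymptotic convergence}) is unaffected; but the clean finite-$v$ inequality requires showing that the slack accumulated on $[0,v/a]$ absorbs this tail, which neither you nor the paper does. In short: same route as the paper, and the obstacle you flagged is a genuine gap --- one that the published proof does not acknowledge.
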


The following theorem gives the limit performance of the expectation of $\eta_a^n(v)$ and the covergence of $\eta_a(v)$ in $L^n(\Omega)$.
\begin{thm}\label{asymptotic convergence}
For all $n\geq 1$ and $a>0$, we have
\begin{equation}\label{limit expectation}
\lim_{v\rightarrow\infty}\mathbf{E}\eta_a^n(v)\left(\frac{v}{\sqrt{2}}\right)^{-n}=\left(\frac{a}{\sqrt{2}}\right)^{-n}\,.
\end{equation}
Furthermore, we obtain
\begin{equation}\label{no expectation}
\lim_{v\rightarrow \infty}\eta_a(v)\left(\frac{v}{\sqrt{2}}\right)^{-1}=\left(\frac{a}{\sqrt{2}}\right)^{-1}
\end{equation}
in $L^n(\Omega)$. Finally, we have
\begin{equation}\label{Ln convergence}
\lim_{v\rightarrow\infty}\int_{\Omega}\left|\eta_a(v)\left(\frac{v}{\sqrt{2}}\right)^{-1}-\left(\frac{a}{\sqrt{2}}\right)^{-1}     \right|^nd\mathbf{P}=0.
\end{equation}
\end{thm}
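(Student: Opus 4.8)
The plan is to combine the moment upper bound of Theorem~\ref{upper bound} with a pathwise (almost sure) limit theorem for $\eta_a(v)$, and then to upgrade the moment information to $L^n$-convergence by a Vitali--Scheffé argument. I would organize the proof in three steps.

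\textbf{Step 1 (almost sure asymptotics).} First I would show $\eta_a(v)/v\to 1/a$ almost surely as $v\to\infty$. The input is that $W_t/t\to0$ and $B_t^H/t\to0$ almost surely (law of the iterated logarithm, using $H<1$), whence $\xi_t/t\to0$ and, by a deterministic argument on each path, $\sup_{s\le t}\xi_s/t\to0$ almost surely. Fix $\epsilon>0$ and set $t^{\pm}=(1\pm\epsilon)v/a$. On the one hand $\xi_{t^{+}}+at^{+}=\xi_{t^{+}}+(1+\epsilon)v>v$ eventually, because $\xi_{t^{+}}>-\epsilon v$ for $v$ large; hence $\eta_a(v)\le t^{+}$. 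On the other hand $\sup_{s\le t^{-}}(\xi_s+as)\le \sup_{s\le t^{-}}\xi_s+(1-\epsilon)v<v$ eventually, because $\sup_{s\le t^{-}}\xi_s<\epsilon v$ for $v$ large; hence $\eta_a(v)>t^{-}$. Letting $\epsilon\downarrow0$ gives $\eta_a(v)/v\to1/a$ a.s., i.e. $\eta_a(v)(v/\sqrt2)^{-1}\to(a/\sqrt2)^{-1}$ a.s.

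\textbf{Step 2 (convergence of the moments, \eqref{limit expectation}).} For the $\limsup$ I would substitute $u=v/\sqrt2$ into Theorem~\ref{upper bound} and evaluate $L_n(v/\sqrt2,H',a/\sqrt2)$ by the Laplace method as $v\to\infty$. Writing the exponent as $h(t)=\tfrac{v}{\sqrt2}\,t^{-H'/(n-H')}-\tfrac{a}{\sqrt2}\,t^{(1-H')/(n-H')}$, it vanishes at $t_0=(v/a)^{\,n-H'}$ with
$$
h'(t_0)=-\frac{1}{n-H'}\Big(\frac{a}{\sqrt2}\Big)^{n}\Big(\frac{v}{\sqrt2}\Big)^{1-n},
$$
so a Gaussian width estimate $\int e^{-h^2/2}\,dt\sim\sqrt{2\pi}/|h'(t_0)|$ yields
$$
L_n(v/\sqrt2,H',a/\sqrt2)\ \sim\ \sqrt{2\pi}\,(n-H')\,\Big(\frac{a}{\sqrt2}\Big)^{-n}\Big(\frac{v}{\sqrt2}\Big)^{n-1},
$$
and the analogous equivalent (with $n$ replaced by $n+1$) for $L_{n+1}$. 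Plugging both into the bound of Theorem~\ref{upper bound} and multiplying by $(v/\sqrt2)^{-n}$, the two terms contribute $H'(a/\sqrt2)^{-n}$ and $(1-H')(a/\sqrt2)^{-n}$, which add to $(a/\sqrt2)^{-n}$; hence $\limsup_v \mathbf{E}\eta_a^n(v)(v/\sqrt2)^{-n}\le(a/\sqrt2)^{-n}$. For the reverse inequality I would apply Fatou's lemma to the a.s. limit of Step~1, obtaining $\liminf_v \mathbf{E}\eta_a^n(v)(v/\sqrt2)^{-n}\ge(a/\sqrt2)^{-n}$. Together these give \eqref{limit expectation}.

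\textbf{Step 3 ($L^n$-convergence, \eqref{no expectation} and \eqref{Ln convergence}).} Put $Y_v=\eta_a(v)(v/\sqrt2)^{-1}\ge0$. By Step~1, $Y_v\to\sqrt2/a$ almost surely, and by Step~2, $\mathbf{E}[Y_v^n]\to(\sqrt2/a)^{n}=\mathbf{E}[(\sqrt2/a)^n]<\infty$. Since the limit is deterministic and the $L^n$-norms converge, the Vitali (Riesz--Scheffé) convergence theorem gives $\mathbf{E}\,|Y_v-\sqrt2/a|^{\,n}\to0$, which is precisely \eqref{Ln convergence} and restates \eqref{no expectation}. The delicate point of the whole argument is the Laplace asymptotics of $L_n$ and $L_{n+1}$ in Step~2: one must justify rigorously that the integrals concentrate at their respective saddles $t_0$ and that the small-$t$ and large-$t$ tails are negligible, so that the leading constants are exact and the coefficients $H'$ and $1-H'$ sum to $1$. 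Once $\xi_t/t\to0$ a.s. is established, the sandwich in Step~1 and the Vitali step in Step~3 are routine.
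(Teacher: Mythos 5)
Your proof is correct, but it takes a genuinely different route from the paper's on two of the three parts. The upper bound in \eqref{limit expectation} is the same in both: substitute the asymptotics $L_n(u,H,a)\sim\sqrt{2\pi}\,(n-H)a^{-n}u^{n-1}$ into Theorem \ref{upper bound} and check that the coefficients $H'$ and $1-H'$ sum to $1$; note that your self-declared delicate point (rigorous Laplace asymptotics of $L_n$, $L_{n+1}$) can be discharged by simply citing Lemma 1 of Michna (1998b), which is exactly what the paper does, so no rigor debt remains there. The divergence is in the lower bound: the paper compares $\xi_s+as$ with a time-changed Brownian motion via Slepian's inequality, introduces the first-passage time $\rho(v)$ of $\bar{W}$ over the curved barrier $\frac{v}{\sqrt2}-\frac{a}{\sqrt2}f^{-1}(t)$ with $f(t)=\frac12(t+t^{2H})$, bounds its density $p(t)$ from below, exploits monotonicity of an auxiliary function $F(x)$, and integrates --- a computationally heavy argument. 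You instead prove the pathwise law $\eta_a(v)/v\to 1/a$ a.s.\ (from $\xi_t/t\to 0$ a.s., valid by the strong law for $W$ and the LIL for $B^H$ with $H<1$, plus a sandwich whose null set does not depend on $v$) and then obtain the liminf for free from Fatou's lemma. Your route is more elementary and buys two extra things: (i) an almost sure convergence statement strictly stronger than anything asserted in the theorem, and (ii) a clean derivation of the $L^n$-convergence \eqref{no expectation}--\eqref{Ln convergence} via the Riesz--Scheff\'e theorem (a.s.\ convergence plus convergence of the $n$-th moments to the $n$-th moment of the limit). Point (ii) actually fills a gap in the paper, whose proof ends with the bare assertion that \eqref{no expectation} and \eqref{Ln convergence} are ``at the same time'' satisfied; passing from moment convergence alone to $L^n$-convergence requires an additional argument (method of moments to get convergence in probability, then uniform integrability from the bounded higher moments), which your Step 3 replaces by a standard one-line citation. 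What the paper's approach buys in exchange is independence from strong-law/LIL inputs: it stays entirely within Gaussian comparison inequalities and explicit first-passage densities, in the spirit of Michna's treatment of the purely fractional case.
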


\begin{rem}
Until now, the asymptotical performance for the situation $H<1/2$ is still not clear, it will be our future research interest.
\end{rem}

\subsection{Estimation of the ruin probability with the unknown parameter $\vartheta$}
At the beginning of this article, we have elaborated that with the approximation the parameter $\vartheta$ is unknown. In this part, we will try to estimate the $\vartheta$ and the ruin probability with the unknown parameter from the observation $(X_t)_{0\leq t \leq T'}$.  First we try to use the continuous time observation. Thus, in the model
$$
X_t=u+\vartheta-\xi_t,\, 0\leq t\leq T'\,.
$$
We can observe all the trajectories of $(X_t)_{0\leq t \leq T'}$, that will be the continuous-time observation method. Now let $Y_t=X_t-u$, then the process $(Y_t)_{0\leq t\leq T'}$ satisfies
$$
Y_t=\vartheta t-\xi_t\,.
$$
From this observation, we take $\hat{\vartheta}_{T'}$ as the Maximum Likelihood Estimator (MLE) of the unknown parameter $\vartheta$. At the same time with this MLE, we take $\hat{\psi}(u,T)=\psi(\hat{\vartheta}_{T'},u,T), T'<T$ as the ruin probability $\psi(u,T)$ (Finite-time ruin probability or ultimate ruin probability) with the unknown parameter $\hat{\vartheta}_{T'}$. As we know, the asymptotical properties of $\hat{\vartheta}_{T'}$ depends on $T'\rightarrow \infty$ so when we want to study this properties of $\hat{\psi}(u,T)$ we will take $T=\infty$ and take this notation as $\hat{\psi}(u)$ then we have the following results:
\begin{thm}\label{convergence ruin}
When $\hat{\psi}(u)$ defined as the ultimate ruin probability with respect to the estimator $\hat{\vartheta}_{T'}$, then 
\begin{itemize}
\item for $H>1/2$, 
\begin{equation}\label{eq: bigger asym}
T'^{1-H}\left(\hat{\psi}(u)-\psi(u)\right)\sim \mathcal{N}\left(0,\,  \frac{2H\Gamma(H+1/2)\Gamma(3-2H)}{\Gamma(3/2-H)}\partial_{\vartheta}^2\psi(u)       \right)
\end{equation}
\item for $H<1/2$,
\begin{equation}\label{eq: smaller asym}
\sqrt{T}\left(\hat{\psi}(u)-\psi(u)\right)\sim \mathcal{N}\left(0, \partial_{\vartheta}^2\psi(u) \right)
\end{equation}
where $\Gamma(\cdot)$ is the Gamma function.
\end{itemize}
\end{thm}

\begin{rem}
From the equation \eqref{finite ruin exact}  the function $\psi(u,T)$ is continuous differentiable  with respect to $\vartheta$ for $a$ fixed and when $H>1/2$ we have
\begin{eqnarray*}
\partial_{\vartheta}\psi(u)&=&\mathbf{E}\left[-\frac{\vartheta}{\sigma}\int_0^{\eta_a(u)}  g(s,\eta_a(u))d\xi_s-(a+\vartheta)\int_0^{\eta_a(u)}g^2(s,s) ds\right.\\
&&\left.\exp \left(-\frac{a+\vartheta}{\sigma}\int_0^{\eta_a(u)}  g(s,\eta_a(u))d\xi_s-\frac{(a+\vartheta)^2}{2}\int_0^{\eta_a(u)}g^2(s,s) ds \right)\right]
\end{eqnarray*}
In fact, we can also use Malliavin calculus to define this derivative which will be presented in another research work. On the other hand, we want to emphasize that in the simulation, when the equation \eqref{finite ruin exact} has the exponential function, so the tail of Gaussian distribution of $\xi_t$ for every  $t$ can not be ignored.  
\end{rem}
\begin{proof}
From \cite{CK} we know that the MLE of $\vartheta$ is given by
\begin{equation}\label{eq: MLE}
\hat{\vartheta}_{T'}=\frac{\int_0^{T'}g(t,T')dY_t}{\int_0^{T'}g(t,T')dt}
\end{equation}
and the corresponding estimation error is normal
\begin{equation}\label{eq: different distribution}
\hat{\vartheta}_{T'}-\vartheta \sim \mathcal{N} \left(0, \frac{1}{\int_0^{T'} g(t,T')dt} \right)
\end{equation}
where the function $g(t,T'),\, t\in [0,T']$ is the solution of the equation \eqref{eq: integro-differential}. Here   $\sim$ is not the asymptotical symbol but for every $T'$ satisfied. From the two limit properties \eqref{eq: asym variance bigger} and \eqref{eq: asym variance smaller} we have
$$
\lim_{T'\rightarrow \infty} T'^{2-2H} \mathbf{E}_{\vartheta}\left(\hat{\vartheta}_{T'}-\vartheta\right)=\frac{2H \Gamma(H+1/2)\Gamma(3-2H)}{\Gamma(3/2-H)},\, H>1/2
$$
and 
$$
\lim_{T'\rightarrow \infty} T' \mathbf{E}_{\vartheta}\left(\hat{\vartheta}_{T'}-\vartheta\right)=1,\, H<1/2.
$$
Then the Delta method achieves the proofs..
\end{proof}

\subsection{Simulation of the estimated ruin probability}
In last part, we have verified that our simulation method for the function $g(s,t)$ is efficient for all $H\in (0,1)$.  Now, we use this method to verify the asymptotical properties of $\hat{\vartheta}_{T'}$ and the associated $\hat{\psi}(u)$. For the sake of simplify, we only check the case of $H>1/2$. To evidence the result of \eqref{eq: different distribution}, we now investigate
the asymptotic distribution of $\hat{\vartheta}_{T'}-\vartheta$ (given by \eqref{eq: different distribution}). Here, the chosen parameters are $H$=0.6, $\theta$=1.2 and $\sigma$=1 and we take $T'$=100. The histogram in the Figure 3 indicates that the normal approximation of the asymptotic distributions of $\hat{\vartheta}_{T'}-\vartheta$
is reasonable even with small $T'$.

\begin{center}
\resizebox{160mm}{80mm}{\includegraphics{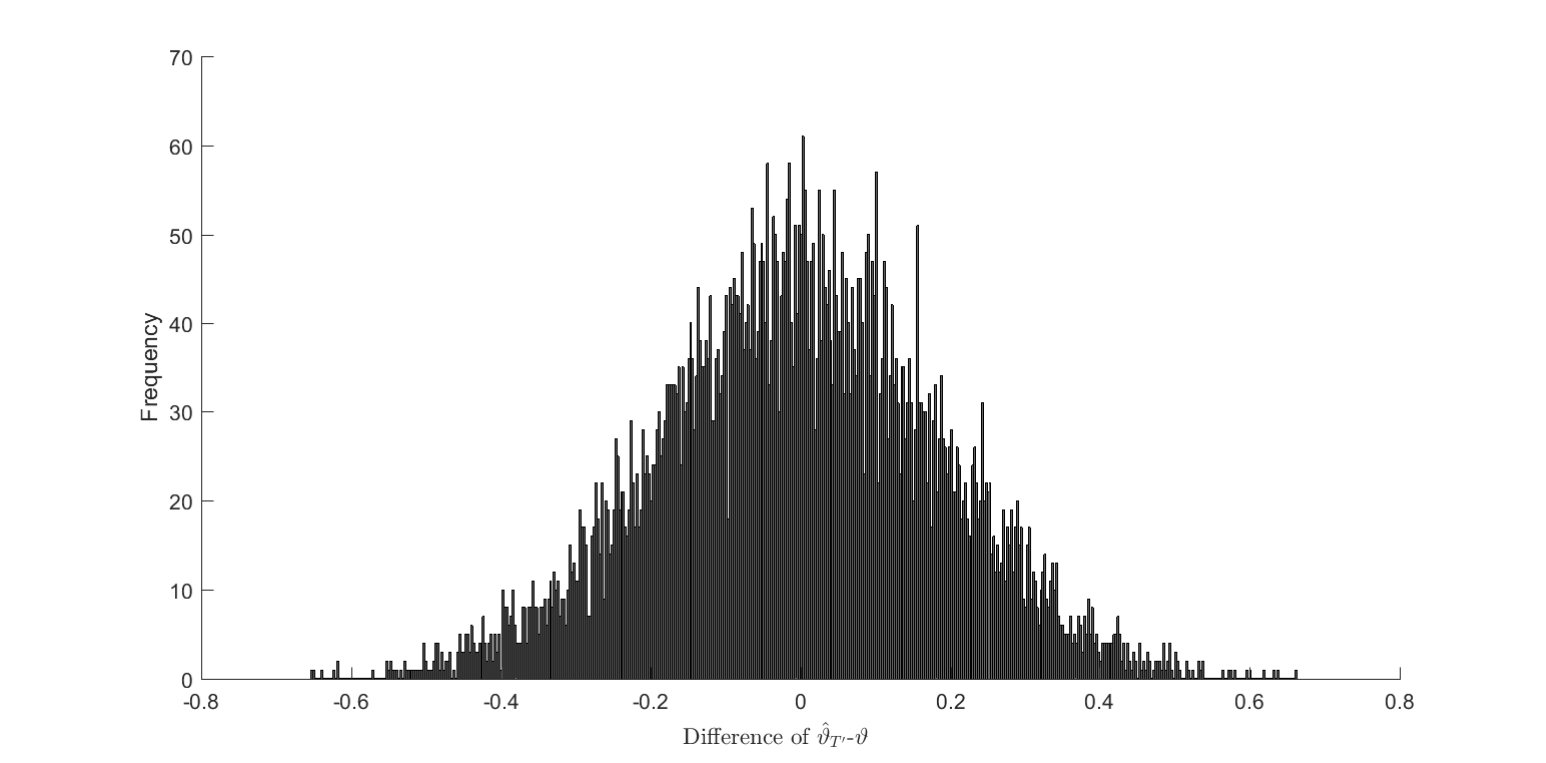}}
\\Fig.3. The distribution of the difference of $\hat{\vartheta}_{T'}-\vartheta$.
\end{center}

\begin{center}
\resizebox{160mm}{80mm}{\includegraphics{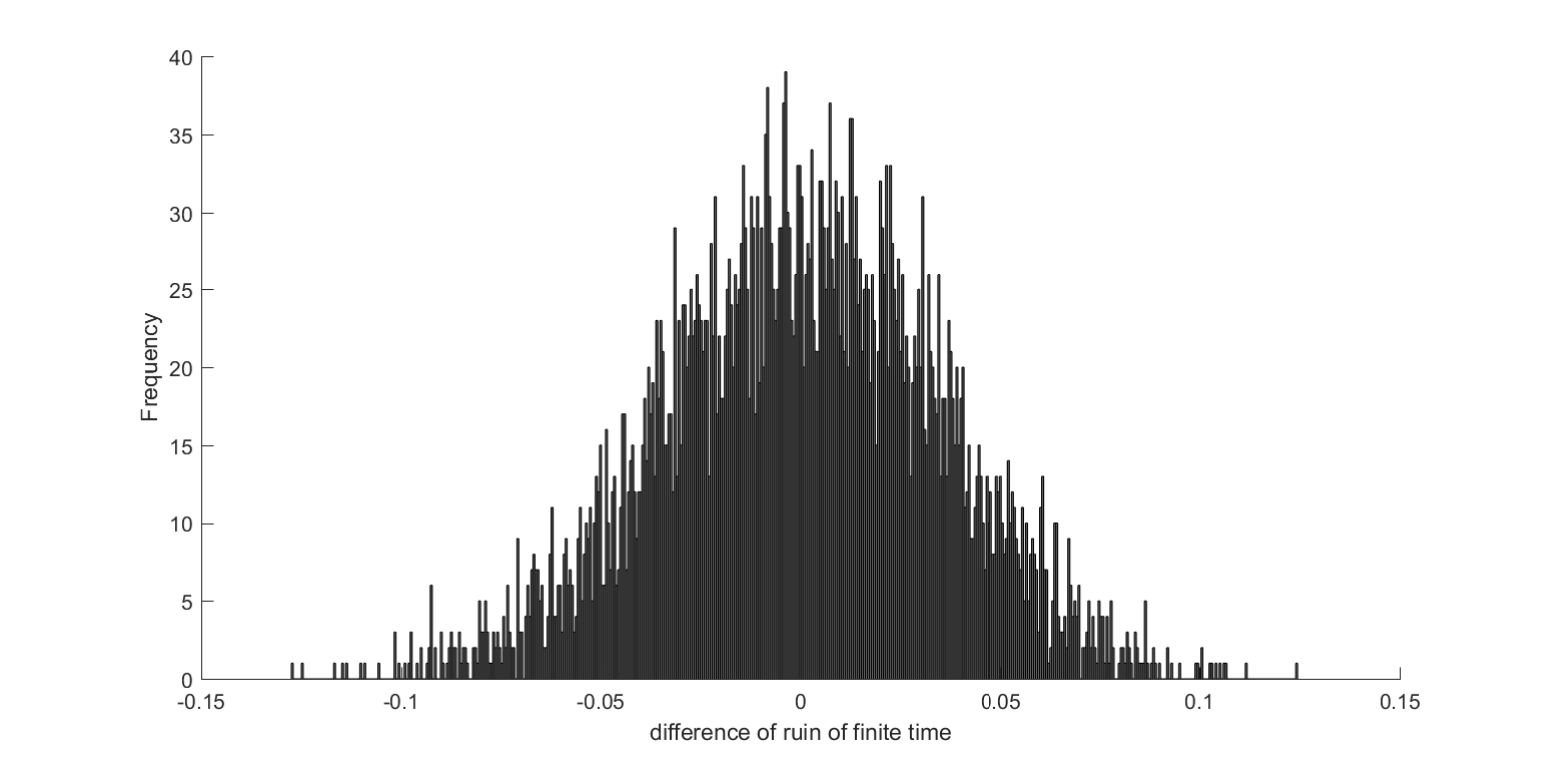}}
\\Fig.4. Difference of ruin.
\end{center}

In fact, the normal property of $\hat{\vartheta}_{T'}$ is nature. For the ultimate ruin probability, from eqref{finite ruin exact 1} when $a$ is large enough we can use $\psi(u,T)$ to replace $\psi(u)$, so we choose $T$=200, $u=2$, $H$=0.6, $\vartheta$=1.2. Then the ruin probability will be $\psi(u,T)$=0.0870. The variance of the right of \eqref{eq: bigger asym} will be 0.0011. The Figure 4 gives the normal distribution of the difference of the ruin probability with the variance of normal 0.0012 which is in line of our expectation.

\section{Proofs for the Main Results}\label{sec: Appendix}
\subsection{Proof for the Girsanov Formula: Theorem \ref{Girsanov formula}}
To prove the Girsanov formula, we only need to prove that the finite-dimensional distribution of $\xi$, with respect to the measure $\mathbf{P}_a$, are those of a mfBm with a drift $a$. We fix $t_i\in [0,T]$, $i=1,...n$ and put $\tau^{*}=(t_1,...t_n)$. We need to show
$$
\left(X_{t_1},...,X_{t_n}\right) \xlongequal  {\mathbf{P}_a}\mathcal{N}(a\tau,\Sigma)
$$
where $\Sigma=(\cov_0(\xi_{t_i},\xi_{t_j}))_i,j=1,...,n$.

Pick $\alpha^{*}=(\alpha_1,...\alpha_n)$ and note that
$$
\mathbf{E}_a \exp \left(\sum_{i=1}^n \alpha_i\xi_{t_i}\right)=\mathbf{E}_0\exp\left(\sum_{i=1}^n\alpha_i\xi_{t_i}+aM_T-\frac{a^2}{2}\langle M,M\rangle_T\right).
$$

With respect to the measure $\mathbf{P_0}$, the random variable $U=\sum_{i=1}^n\alpha_i\xi_{t_i}+aM_T$ is a Gaussian variable with the mean $\mathbf{E}_0U=0$ and the variance
$$
\mathbf{E}_0U^2=\mathbf{E}_0(aM_T^2)+\mathbf{E}_0\left(\sum_{i=1}^n\alpha_i\xi_{t_i}\right)^2+2\sum_{i=1}^n\cov_0(aM_T,\alpha_i\xi_{t_i}).
$$

We will calculate $\cov_0(M_t,\,\xi_s)$  for $s<t$. From the equation (3.18) in \cite{CCK}
\begin{equation*}
\begin{aligned} 
\cov_0(M_t,\, \xi_s) & = \int_0^tg(r,t)\mathbf{1}_{[0,s]}(r)dr+\int_0^t\mathbf{1}_{[0,s]}(r)\frac{\partial}{\partial r}\int_0^t g(\ell, t)\frac{\partial}{\partial \ell} R(\ell, r) d\ell dr\\
& = \int_0^t\mathbf{1}_{[0,s]}(r)\left\{g(r,t)+\frac{\partial}{\partial r}\int_0^t g(\ell, t)\frac{\partial}{\partial \ell} R(\ell, r)d\ell \right\}dr\,.
\end{aligned}
\end{equation*}
From the equation \eqref{eq: integral big}, we have
$$
g(r,t)+\frac{\partial}{\partial r}\int_0^t g(\ell, t)\frac{\partial}{\partial \ell} R(\ell, r)d\ell=1,\, 
 $$
Thus $\cov_0(M_t,\, \xi_s)=s$ which implies that $\cov_0(aM_T,\,\alpha_i\xi_{t_i})=\alpha_ia t_i$. Hence, we have
$$
\mathbf{E}_0U^2=a^2\langle M, M\rangle_T+\alpha^{*}\Sigma\alpha+2\sum_{i=1}^n\alpha_ia t_i\,,
$$
when $U$ is Gaussian and
$$
\mathbf{E}_0\exp(U)=\exp\left(\frac{a^2\langle M, M\rangle_T+2 a \alpha^{*}\tau+\alpha^{*}\Sigma\alpha}{2}  \right)\,.
$$

As a consequence, we can show that
$$
\mathbf{E}_a\exp\left(\sum_{i=1}^n\alpha_i\xi_{t_i}\right)=\exp\left(a \alpha^{*}\tau+\frac{\alpha^{*}\Sigma\alpha}{2}\right)\,.
$$
\subsection{The performance of $\eta_a(v)$: Theorem \ref{upper bound} and \ref{asymptotic convergence}}
To get the upper bound of $\eta_a(v)$ in the Theorem \ref{upper bound}, let $\Phi(t)$ be the distribution function of the standard normal distribution $U\sim\mathcal{N}(0,1)$ and the $\phi(t)$ be the density function. By some basic calculations, we find
\begin{eqnarray*}
\mathbf{P}\left\{\sup_{s\leq t}\left(\xi_s+as\right)>v\right\} &\geq& \mathbf{P} \left\{ \xi_t+at>v\right\}=\mathbf{P}\left(\xi_t>v-at\right)\\
&=&\mathbf{P} \left(U>\frac{v-at}{\sqrt{t+t^2}}\right)=1-\Phi\left(\frac{v-at}{\sqrt{t+t^{2H}}} \right)\,.
\end{eqnarray*}

Since $\Phi(t)$ is a non-decreasing function and $t+t^{2H}\geq 2t^{(2H+1)/2}$, we have
$$
\mathbf{P}\left\{\sup_{s\leq t}\left(\xi_s+as\right)>u\right\}\geq 1-\Phi\left(\frac{v}{\sqrt{2}} t^{-(2H+1)/4}-\frac{a}{\sqrt{2}} t^{1-(2H+1)/4}\right)=1-\Phi\left(\frac{v}{\sqrt{2}} t^{-H'}-\frac{a}{\sqrt{2}} t^{1-H'}\right).
$$

Now, with the integrating by parts, we have
\begin{eqnarray*}
\mathbf{E}\eta_a^n(v)&=&\int_0^{\infty}\mathbf{P}\left(\eta_a(v)>t^{1/n}\right)dt\\
&=&\int_0^{\infty}\left(1- \mathbf{P}\left\{\sup_{s\leq t^{1/n}}\left(\xi_s+as\right)>v\right\}\right)dt\\
&\leq& \int_0^{\infty}\Phi \left(\frac{v}{\sqrt{2}} t^{-H'/n}-\frac{a}{\sqrt{2}} t^{(1-H')/n}\right)dt\\
&\leq&  \int_0^{\infty}   \left(\frac{vH't^{-H'/n}}{\sqrt{2}n}+\frac{(1-H^{'})at^{(1-H')/n})}{\sqrt{2}n}\right)\phi\left(\frac{u}{\sqrt{2}}t^{-H'/n}-\frac{a}{\sqrt{2}}t^{(1-H')/n}\right) dt.
\end{eqnarray*}
Finally, we divided the integral into two parts:
$$
\frac{vH'}{\sqrt{2}n\sqrt{2\pi}}\int_0^{\infty}t^{-H'/n}\exp\left\{-\frac{1}{2}\left(\frac{v}{\sqrt{2}}t^{-H'/n}-\frac{a}{\sqrt{2}}t^{(1-H')/n}\right)^2\right\}dt
$$
and
$$
\frac{a(1-H')}{\sqrt{2}n\sqrt{2\pi}}\int_0^{\infty}t^{(1-H')/n}\exp\left\{-\frac{1}{2}\left(\frac{v}{\sqrt{2}}t^{-H'/n}-\frac{a}{\sqrt{2}}t^{(1-H')/n}\right)^2\right\}dt
$$
and substituting $t=s^{n/(n-H')}$ and $t=s^{n/(n+1-H')}$,  respectively, we obtain the upper bound.

Now we will prove the Theorem \ref{asymptotic convergence}, with the Lemma 1 in \cite{Michna98b} we have for $H>\frac{1}{2}$, $n\in \mathbb{N}$ and $u\rightarrow \infty$
\begin{equation}\label{Lnu}
L_n(u,H,a)\sim \sqrt{2\pi}(n-H)a^{-n}u^{n-1}.
\end{equation}
Then in the theorem \ref{upper bound}, we have
\begin{eqnarray*}
&&\frac{1}{\sqrt{2\pi}}\left(\frac{vH'}{\sqrt{2}(n-H')}L_n(v/\sqrt{2},H',a/\sqrt{2})+\frac{(1-H')a}{\sqrt{2}(n+1-H')} L_{n+1}(v/\sqrt{2},H',a/\sqrt{2}) \right)\\
&\sim& \left(\frac{v}{\sqrt{2}}\right)^n\left(\frac{a}{\sqrt{2}}\right)^{-n}\,.
\end{eqnarray*}
As a consequence, we have
$$
\lim_{v\rightarrow \infty}\mathbf{E}\eta_a^n(v)\left(\frac{v}{\sqrt{2}}\right)^{-n} \leq \left(\frac{a}{\sqrt{2}}\right)^{-n}.
$$
Now we will find  the lower bound. Using the Slepian's inequality as in \cite{Michna98b} and Lemma 1 in \cite{Michna98a}, we can have
$$
\mathbf{P}\left\{\sup_{s\leq t}(\xi_s+as)>v\right\}\leq \mathbf{P}\left\{\sup_{s\leq t}(\bar{W}(s+s^{2H}) +as)>v\right\}\leq \mathbf{P} \left\{\sup_{s\leq t}\bar{W}(\frac{1}{2}(s+s^{2H})+\frac{1}{\sqrt{2}}as)>\frac{v}{\sqrt{2}} \right\}\,,
$$
where $\bar{W}(t)$ is a standard Brownian motion.

Let $f(t)=\frac{1}{2}(t+t^{2H})$ and we denote  $\rho(u)$ as
$$
\rho(v)=\inf\{t>0: \bar{W}(t)+\frac{a}{\sqrt{2}}f^{-1}(t)>\frac{v}{\sqrt{2}}\}\,.
$$

Then, we have
\begin{eqnarray*}
\mathbf{P} \left\{\sup_{s\geq t}\bar{W}(\frac{1}{2}(s+s^{2H})+\frac{1}{\sqrt{2}}as)>\frac{v}{\sqrt{2}}\right\}&=&\mathbf{P}\left\{\sup_{s\leq f(t)}(\bar{W}(s)+\frac{a}{\sqrt{2}}f^{-1}(s))>\frac{v}{\sqrt{2}}\right\}\\
&=&\mathbf{P}\{f^{-1}(\rho(v))\leq t\}\,,
\end{eqnarray*}
with the barrier $\frac{v}{\sqrt{2}}-\frac{a}{\sqrt{2}}f^{-1}(t)$.

Indeed, we can see that the lower bound of the density of $\rho(u)$ is
\begin{eqnarray*}
p(t)&=&t^{-3/2}(\frac{v}{\sqrt{2}}-\frac{a}{\sqrt{2}}f^{-1}(t)-t[\frac{v}{\sqrt{2}}-\frac{a}{\sqrt{2}}f^{-1}(t)]')\phi(t^{-1/2}(\frac{v}{\sqrt{2}}-\frac{a}{\sqrt{2}}f^{-1}(t)))\\
&=&t^{-3/2}\left(\frac{v}{\sqrt{2}}-\frac{a}{\sqrt{2}}f^{-1}(t)+\frac{a}{\sqrt{2}}t[f^{-1}(t)]'\right)\phi(t^{-1/2}(\frac{v}{\sqrt{2}}-\frac{a}{\sqrt{2}}f^{-1}(t)))\\
&=& t^{-3/2}\left(\frac{v}{\sqrt{2}}-\frac{a}{\sqrt{2}}f^{-1}(t)+\frac{2a}{\sqrt{2}}t\left[\frac{1}{1+2H(f^{-1}(t)^{2H-1})}      \right]\right)\phi(t^{-1/2}(\frac{v}{\sqrt{2}}-\frac{a}{\sqrt{2}}f^{-1}(t)))\,.
\end{eqnarray*}

Now we construct a function $F(x)$, which is defined by
$$
F(x)=x^nt^{-3/2}\left(\frac{v}{\sqrt{2}}-\frac{a}{\sqrt{2}}x+\frac{2a}{\sqrt{2}}t\left[\frac{1}{1+2Hx^{2H-1}} \right]\right)\phi(t^{-1/2}(\frac{v}{\sqrt{2}}-\frac{a}{\sqrt{2}}x))\,.
$$

Thus $F(x)$ is a non-increasing function when $x\leq v/a$ and a non-decreasing function when $x\geq v/a$. In fact for every $t$ fixed, we have $f^{-1}(t)\leq t^{1/(2H')}$ if $H'=\frac{2H+1}{4}$. We can also have $f^{-1}(t)\geq t^{1/(2H)}$ if $t\geq 1$. Thus, if $u$ is a fixed number, then for $t\leq f(v/a)$, $F(x)$ is non-increasing and non-decreasing for $t\geq f(v/a)$.

Moreover, a standard calculation yields
$$
\mathbf{E}\eta_a^n(v)\geq \mathbf{E}\left[f^{-1}(\rho(v))\right]^n
\geq \int_0^{\infty}\left[f^{-1}(t)\right]^np(t)dt\,.
$$

Now with the properties of $F(x)$, we can see that
\begin{eqnarray*}
\mathbf{E}\eta_a^n(v)&\geq &\int_0^{f(v/a)}t^{n/(2H')-3/2}\left(\frac{v}{\sqrt{2}}-\left(1-\frac{1}{2H'}\right)\frac{a}{\sqrt{2}}t^{1/(2H')}\right)\phi\left(\frac{v}{\sqrt{2}}-\frac{a}{\sqrt{2}}t^{(1-H')/(2H')}\right)\\\
&+&\int_{f(v/a)}^{\infty}t^{n/(2H')-3/2}\left(\frac{v}{\sqrt{2}}-\left(1-\frac{1}{2H}\right)\frac{a}{\sqrt{2}}t^{1/(2H)}\right)\phi\left(\frac{v}{\sqrt{2}}-\frac{a}{\sqrt{2}}t^{(1-H')/(2H)}\right)\,.
\end{eqnarray*}

Using Lemma 1 in \cite{Michna98b}, we have
\begin{equation}\label{lower bound}
\lim_{v\rightarrow \infty} \frac{\int_0^{f(v/a)}t^{n/(2H')-3/2}\left(\frac{v}{\sqrt{2}}-\left(1-\frac{1}{2H'}\right)
\frac{a}{\sqrt{2}}t^{1/(2H')}\right)\phi\left(\frac{v}{\sqrt{2}}
-\frac{a}{\sqrt{2}}t^{(1-H')/(2H')}\right)}{\left(\frac{v}{\sqrt{2}}\right)^{n}}
=\left(\frac{a}{\sqrt{2}}\right)^{-n}\,.
\end{equation}
$$
\lim_{v\rightarrow \infty}\mathbf{E}\eta_a^n(v)\left(\frac{v}{\sqrt{2}}\right)^{-n} \geq  \left(\frac{a}{\sqrt{2}}\right)^{-n}.
$$
Then \eqref{limit expectation} is satisfied. At the same time \eqref{no expectation} or \eqref{Ln convergence} is satisfied.

\end{document}